\newcommand{\refp}[1]{~(\ref{#1})}
\newtheorem{theorem}{Theorem}
\title{First-passage duality in $d>2$ dimensions}
\author{Merek Johnson}
\date{\today}
\begin{document}

\twocolumn[
  \begin{@twocolumnfalse}
    \maketitle
    \begin{abstract}
      In one and two dimensions, the first-passage time for a diffusing particle in the presence of a radial potential flow to hit a sphere, conditioned on actually hitting the sphere, is independent of the sign of the drift.  Moreover, the first-passage distribution for an inward (negative) drift is identical to the first-passage distribution for outward (positive) drift when conditioned on hitting the sphere.  This curious property was first observed in 2018 by Krapivsky and Redner, and in this work we extend this duality of first-passage times and their distributions to dimensions $d>2$.  We show that the fundamental source of the symmetry is the fact that a reversal of the direction of the drift amounts to taking the adjoint of the spatial operator governing the evolution of the process, and the factor that converts between solutions of the forward and backward equations is precisely the non-transient behavior of the system with which to appropriately condition by.  \\ \vspace{0.5cm}
    \end{abstract}
  \end{@twocolumnfalse}
]

%%%%%%%%%%%%%%%%

\section{Introduction}

The study of first-passage times is motivated by the guiding question: How long might one expect it to take for a diffusing particle to reach some target?  This question naturally arises in a broad range of fields, including in the study of chemical processes~\cite{Gillespie2}\cite{Gillespie1}, protein folding behavior~\cite{proteins}, stock options pricing~\cite{Black}, and extinction rates of infectious disease~\cite{extinct}.  First-passage times and more generally, hitting times, have been studied extensively.  Nonetheless, even some of the simplest low-dimensional processes exhibit sufficiently rich dynamics that curious properties of first-passage time behavior continue to emerge.  In 2018, Krapivsky and Redner~\cite{duality} first noted a rather remarkable duality for one and two dimensional diffusions in the presence of a radial potential flow.  They showed that the mean first-passage time for a particle with initial position $r$ to hit a target point $a<r$ or circle of radius $a<|r|$, conditioned on \textit{actually hitting the target}, is independent of the sign of the drift.  In fact, they show that not only are the hitting-times equal but the first-passage distributions themselves are identical when appropriately conditioned.  

In this paper, we extend this duality to dimensions $d>2$ where the target set is a $d$-sphere and the drift is characterized by a radially symmetric potential.  In particular, we show that the first-passage distribution under a negative (inward) drift is precisely equal to the first-passage distribution under a positive (outward) drift when conditioned on hitting the hypersphere.  Furthermore, we establish that the underlying mathematical symmetry is due to the fact that changing the sign of the drift is equivalent to taking the adjoint of the spatial operator governing the process, and therefore amounts to converting between the forward and backward Kolmogorov equations.  The factor that allows us to convert between solutions to the forward and backward equations is precisely the non-transient behavior of the system that appropriately conditions the first-passage distribution.  

In Section 2 we describe the setting in $d=1$ and $d=2$ and make precise the duality as noted by Krapivsky and Redner~\cite{duality}.  We adopt much of their notation and methods for the sake of exposition and to ensure that this work be somewhat self-contained.  In Section 3 we establish the appropriate higher-dimensional setting and prove the duality in $d>2$.  In Section 4 we consider the relationship between solutions of the forward and backward equations for a more general class of processes and give an account for the mathematical source of the symmetry underlying the duality when drift and diffusion are both constant.  We end with some concluding remarks on the limitations of extending the duality to anisotropic diffusions and the possibility of extending the duality to diffusions under a broader class of drift potentials.

%%%%%%%%%%%%%%%%

\section{Prior results}

%%%%%%%%%%%%%%%%

\subsection{The duality in $d=1$} 

In one dimension, we have the well-studied scenario:  a particle with initial position $x_0>0$ diffuses under the presence of a constant velocity field $\sigma$, and we ask how long we expect it to take to hit the origin, $x=0$.  

To find the mean first-passage time in this context, we may directly compute the first-passage distribution by measuring the outward flux of particles (trajectories) at the target $0$.  In other words, we explicitly solve the Fokker-Planck equation governing the occupation density (also known as transition or concentration density), 
\begin{align*}
& u_t  = (Du)_{xx} - (\sigma u)_x, \\
& u(x,0) = \delta (x-x_0)
\end{align*}
subject to the Dirichlet condition $u(0,t)=0$ which allows us to ignore all trajectories that hit 0 even if they return in the future to the positive domain.  This may be solved in a myriad of ways, but perhaps the most elegant is to use the method of images.  If we imagine an anti-particle with initial position $-x_0$, then an anti-Gaussian will satisfy the equation of motion (albeit subject to the same drift as the regular particle, not an ``anti-drift'').  Properly weighting the anti-Gaussian ensures the boundary condition is satisfied, and 
\begin{align}
u(x,t) = & \frac{1}{\sqrt{4\pi D t}} \bigg\{ \exp\left[ \frac{-(x-x_0-\sigma t)^2}{4Dt}\right] \notag \\ 
& \quad -  \exp\left[\frac{\sigma x_0}{D} - \frac{(x+x_0-\sigma t)^2}{4Dt}\right]\bigg\}.
\end{align}
The first-passage distribution $f$ may then be found either by computing the outward flux at 0 or equivalently by the rate of loss of mass:
\[
f(x_0,t) = - u_x(0,t) = -\frac{\partial}{\partial t} \int_0^\infty dx \, u(x,t).
\]
Using subscripts to denote the first-passage distribution given positive or  negative drift, we have
\[
f_\pm(x_0,t) = \frac{x_0}{\sqrt{4\pi Dt^3}} e^{-(x_0 \pm |\sigma| t)^2/4Dt}.
\]
When $\sigma=0$, this exhibits well-known but somewhat surprising behavior:  the particle has probability 1 of hitting $x=0$ (this is known as \textit{recurrence}), but the expected time to do so is infinite.  Unsurprisingly, $\sigma=0$ is a critical point for the dynamical properties of the system.  When $\sigma<0$, the mean first-passage time becomes finite (\textit{strong recurrence}).  On the other hand, when $\sigma>0$, not all trajectories hit 0 (\textit{transience}).  Borrowing intuition from random walks, a particle must experience an increasingly unlikely number of successive successes of flips of the coin in order to overcome the prevailing positive wind.  

We are now in a position to discuss the duality.  To repeat, the claim is that the mean first-passage time (moreover the first-passage distribution itself), when conditioned on actually hitting 0, is independent of the sign of $\sigma$.  Of course, conditioning is only required in the case of positive drift where there exists transient behavior -- trajectories that never hit the origin.  In this case, the probability $H_+$ of hitting 0, as a function of the initial position $x_0$, is given by the zeroth moment of the first-passage distribution, 
\[
H_+(x_0) = \int_0^\infty dt \, f_+(x_0,t) = e^{-\sigma x_0/D}.
\]
The first-passage distribution conditioned on actually hitting 0 is then $f_+/H_+$ and it follows that 
\begin{align}
\frac{f_+(x_0,t)}{H^+(x_0)} 
& = \frac{1}{e^{-\sigma x_0/D}} \frac{x_0}{\sqrt{4\pi Dt^3}} e^{-(x_0 + \sigma t)^2/4Dt} \notag\\
& = \frac{x_0}{\sqrt{4\pi Dt^3}} e^{(-x_0^2 + 4\sigma x_0t -2\sigma x_0t -\sigma^2t^2 )/4Dt} \notag\\
& =  \frac{x_0}{\sqrt{4\pi Dt^3}} e^{-(x_0-\sigma t)^2/4Dt}\notag\\
& = f_- (x_0,t). \phantom{\frac{1}{1}} \label{1Dduality}
\end{align}
This is the duality in one dimension.  If we write $T_n$ for the moments of the first-passage distribution then 
\[
T_1(x_0) = \int_0^\infty dt \, f_-(x_0,t) t = \frac{x_0}{|\sigma|}.
\]
In essence, the stronger the wind is pushing the particle toward 0, the faster one expects it to hit 0, and the stronger the wind is pushing the particle away from 0, the trajectories that do hit 0 must do so increasingly quickly, on average, as if they are experiencing twice the negative wind so as to not get pushed too far away and become transient. 

%\subsection{Borrowing intuition from random walks and Bayes Theorem}

%Consider a random walk on the integers starting at some positive $z$, with probability $p$ of moving right (positive direction) and probability $q$ of moving left where $p>q$.  The probability of eventually hitting the origin is given by $(q/p)^z$~\cite{Feller}.  Let us denote this event $B$ and let $A$ be the event of moving left on the first step.  Then $P(B|A) = (q/p)^{z-1}$ and by Bayes Theorem
%\[
%P(A|B) = \frac{P(B|A)P(A)}{P(B)} = \frac{(q/p)^{z-1} q}{(q/p)^z} = p.
%\]
%In other words, 

%%%%%%%%%%%%%%%%

\subsection{The duality in $d=2$}

The two dimensional analog of interest is a diffusing particle subject to a radially symmetric potential velocity field of the form $\vec\sigma = \frac{1}{r} \hat r$ with a target set of a circle centered at the origin.\footnote{One could imagine other higher dimensional analogs, for example diffusing toward a hyperplane in the presence of some constant linear drift.  This however would reduce to the one dimensional case by projecting the diffusing behavior and the constant drift to the direction orthogonal to the target hyperplane.}  By the radial symmetry of the problem, angular movement of the particle bears less importance and  thus our focus is on the radial coordinate.  We denote the initial position $r$ and let $a<r$ be the radius of the target circle.  For the purposes of exposition, we will focus only on the duality for the mean first-passage time in two dimensions, and omit the details for the first-passage distributions, which may be found in~\cite{duality}.  

For more complex problems, solving directly for the occupation density (let alone the first-passage distribution) may not be analytically tractable.\footnote{To find an analytic form of the most general form of the first-passage distribution, for a one-dimensional Ornstein-Uhlenbeck process with a target set $a\neq 0$ remains an open question \refp{}.} There is however no shortage of methods of computing moments of the first-passage distribution, whether it be via Siegert's recursion relation~\cite{Siegert}, Dynkin's formula (for the mean first-passage time) ~\cite{Oksendal}, or deriving various ordinary differential equations for the moments from either the backward equation or biased random walk (known as the Laplacian formalism~\cite{BookRedner}).  Working from the random walk is particularly useful as it also allows us to also consider the conditioned hitting time.  

Let $H(r)$ be the probability of hitting the target circle given initial position $r$, and let $T(r)$ be the (unconditioned) expected first-passage time.  These are of course the zeroth and first moments of the first-passage distribution. 

Starting from a biased random walk, both $H$ and $T$ may be characterized as weighted averages of the hitting probability and first-passage times of accessible neighboring sites to $r$,
\begin{align}
H(r) & = \sum_{r'} p(r\to r') H(r') \label{eqn:H}\\ 
T(r) & = \sum_{r'} p(r\to r') (T(r') + \delta t ). \label{eqn:T}
\end{align}
Here, $p$ is the transition probability to go from $r$ to $r'$ in time $\delta t$.  The additional term of $\delta t$ in\refp{eqn:T} accounts for the time it would take to transition between states $r$ and $r'$.  In the continuum limit,\refp{eqn:H} and\refp{eqn:T} become a pair of homogeneous and inhomogeneous elliptic equations whose operator is that of the backward Kolmogorov equation: 
\begin{align}
DH'' + \left(\frac{D+\sigma}{r}\right) H' & = 0 \label{eqn:Hode}\\
DT'' + \left(\frac{D+\sigma}{r} \right) T' & = -1 \label{eqn:Tode}
\end{align}
where the constants $D$ is the diffusion coefficient and $\sigma$ the strength of the drift.

Solving\refp{eqn:Hode} subject to the conditions $H(a) = 1$ and $\lim_{r\to\infty}H(r) = 0$, it follows that 
\begin{align}
H_-(r) & = 1\\
H_+(r) & = \left(\frac{r}{a}\right)^{-\sigma/D} \label{Hplus}
\end{align}
where subscript denotes the sign of the radial drift.  

The convenience of starting from the random walk is that it also allows us to derive a differential equation for the \textit{conditional} expected first-passage time, which we denote $\tilde T$.  To write this formally, let $\mathcal{P}_r$ be the path distribution given initial position $r$ and let $T^q$ be the first-passage time of a particular path $q$.  Then 
\[
\tilde T(r) = \frac{\sum_{q} \mathcal{P}_r(q) T^q(r) }{\sum_{q} \mathcal{P}_r(q) } = \frac{\sum_{q} \mathcal{P}_r(q) T^q(r) }{H(r)},
\]
where the sums are over all paths $q$ that start at $r$ and eventually hit the target set.  The approach taken to derive\refp{eqn:Tode} from\refp{eqn:T} may be then generalized to yield a new elliptic equation governing the conditional mean first-passage time,
\begin{equation}
D(H\tilde T)'' + \left(\frac{D+\sigma}{r} \right) (H\tilde T)' = -H,\label{eqn:Tc}
\end{equation}
where $H$ is known.  In the case of negative drift, $H_-(r)=1$ and the conditional equation reduces to the unconditional one\refp{eqn:Tode}.  

The statement of the duality for the expected first-passage time in two dimensions is then 
\begin{equation}
\tilde T_+ = T_-.\label{eqn:2Dduality}
\end{equation}
To see this explicitly we may manually solve the negative drift version of\refp{eqn:Tode} and the positive drift version of\refp{eqn:Tc} with $H_+$ given by\refp{Hplus}.  In both cases the boundary condition at the target set is $T(a) = 0$, but some consideration of the physical behavior of the system is required \cite{duality} to reduce the one-parameter family of solutions to find
\[
\tilde T_+(r) = T_-(r) = 
\begin{cases} \dfrac{D(r^2-a^2)}{2(|\sigma|-2D)} & |\sigma| > 2D \\ \infty & |\sigma|\leq 2D.
\end{cases}
\]
This is the statement of the duality for the expected first-passage time in two dimensions.  The two dimensional case therefore exhibits marginally more complex behavior than that of the one-dimensional system; there is now a recurrent but not strongly recurrent regime when drift is only slightly negative.  Heuristically, an extra dimension allows for enough additional `room' for a diffusing particle to wander in, and a mild negative drift isn't enough to ensure a finite expected hitting time (\textit{strong recurrence}).   

An alternative approach, rather than direct computation, is to recast\refp{eqn:2Dduality} as a statement about solutions to the governing ODEs:  if $H_+$ solves\refp{eqn:Hode} and $T_-$ solves the negative drift version of\refp{eqn:Tode} then $T_-$ (given $H_+$) solves the positive drift version of \refp{eqn:Tc}.  This is straightforward to verify, and is the precisely the nature of the argument that we generalize to demonstrate the duality in higher dimensions, albeit for the first-passage distributions and not just the first-passage time.

%%%%%%%%%%%%%%%%

\section{Extending the duality to $d>2$}

The higher dimensional analog of the two dimensional scenario is a diffusing particle with a radially symmetric drift of the form $\frac{v}{\rho^{d-1}} \hat\rho$, with a target set of a $d$-sphere, where $v$ is the strength of the drift and $\rho$ is the radial coordinate.  Via hyperspherical coordinates the equation of motion may be reduced to one of a single spatial variable,
\[
u_t = D\left( u_{\rho\rho} + \frac{d-1}{\rho} u_\rho\right) - \frac{v}{\rho^{d-1}} u_\rho.
\]
As before, we impose a Dirichlet condition on the target set: $u(a,t) = 0$ where $a<r$ is the radius of the hypersphere.  By defining $\sigma = v/D$ we may write this in the mildly simpler form 
\begin{equation}
u_t = \mathcal{L}^*_\rho u:= D\left[ u_{\rho\rho} + \left(\frac{d-1}{\rho} - \frac{\sigma}{\rho^{d-1}}\right) u_\rho\right] \label{eqn:genforward}
\end{equation}
where we have introduced $\mathcal{L}^*$ as the spatial operator governing the process.\footnote{We use a star superscript here to denote the adjoint, reserving $\mathcal{L}$ to be associated with the infinitesimal generator, the spatial operator in the backward equation.} \\
\indent As in the two dimensional case, we make no attempt to solve this generally.  Since we are after a proof of the duality for the first-passage distributions themselves, our goal is prove an equivalence analogous to\refp{1Dduality}.  We could do this directly for the occupation density (see $\mathsection 4$), although a more intuitive approach is to appeal to the survival probability $S$.  Given the radial symmetry of the dynamics, the survival probability, defined to be the probability of not yet having hit the target set at time $t$, may be expressed as a function of the initial radial distance $r$ from the origin, i.e.
\[
S(r,t) = \int_a^\infty d\rho\, u(\rho,t|r,0).
\]
Note that $S$ depends on an initial state and not a terminal one, and is therefore not directly governed by the forward equation.  Due to the time-homogeneity of the process however, $u(\rho,t|r,0) = u(\rho,0|r,-t)$ and therefore 
\[
\partial_t S = \int_a^\infty d\rho \, \partial_t u(\rho,0| r,-t) = \int_a^\infty d\rho \, \mathcal{L}_{r} u = \mathcal{L}_{r} S.
\]
In other words, the survival probability is governed by the backward equation, where $\mathcal{L}$ is given by 
\begin{equation}
\mathcal{L}_r g := D \left[g_{rr} + \left( \frac{d-1}{r} + \frac{\sigma}{r^{d-1}}\right) g_r\right]. \label{Loperator}
\end{equation}
Note that the usual procedure of finding the adjoint is most easily computed in Euclidean coordinates where the forward equation in general is given by 
\[
u_t = \Delta (Du) - \nabla \cdot (\vec\sigma u).
\]
This ensures the appropriate sign changes; only the sign of the drift changes and \textit{not} the sign of the component of diffusion contributing to the first order term (which may occur from a naive attempt in spherical coordinates).  The adjoint \textit{is} computable directly from\refp{Loperator} however integration against a test function must include the appropriate Jacobian to account for the coordinate change.  

Since the first-passage distribution $f$ is simply $f = \partial_t S$, if we can prove a statement of the form $S_- = S_+/H_+$ then $f$ will inherit the duality.  Let us now distinguish between positive and negative drift backward equations by $\mathcal{L}^\pm$ for the spatial operator.

\begin{theorem}
Let $D>0$ and $\sigma>0$ be fixed.  Suppose that $S_+(r,t)$ is a solution to the backward equation in $d>2$ dimensions,
\begin{align}
& \partial_t S_+ = \mathcal{L}^+_{r} S_+ := D\left[\partial_{rr}S_+ + \left(\frac{d-1}{r} + \frac{|\sigma|}{r^{d-1}} \right) \partial_r S_+\right]\label{eqn:genbackward}\notag\\
& S_+(a) = 1, \quad\quad  \lim_{r\to\infty} S_+(r) =0
\end{align}
and $H_+(r)$ is the probability of hitting the $d$-dimensional hypersphere of radius $a$ from an initial position $r$.  Then $S_+/H_+$ solves the negative drift backward equation, i.e. 
\begin{equation}
\partial_t \frac{S_+}{H_+} = \mathcal{L}^-_{r} \frac{S_+}{H_+}.\label{eqn:theorem1}
\end{equation}
Similarly, if $S_-$ is a solution to the negative drift backward equation, then $S_- H^+$ solves the positive drift backward equation.  
\end{theorem}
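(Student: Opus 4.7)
The plan is to carry out a Doob $h$-transform with $h = H_+$.  Because $H_+$ is time-independent and, as the hitting probability, satisfies $\mathcal{L}^+_r H_+ = 0$, we immediately have
\[
\partial_t\frac{S_+}{H_+} = \frac{\partial_t S_+}{H_+} = \frac{\mathcal{L}^+_r S_+}{H_+},
\]
so the theorem reduces to verifying the operator identity $\mathcal{L}^+_r(H_+ v)/H_+ = \mathcal{L}^-_r v$ applied with $v = S_+/H_+$.

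I would expand $\mathcal{L}^+_r(H_+ v)$ via the product rule.  The coefficient of $v$ (zeroth-order in derivatives) comes out to $\mathcal{L}^+_r H_+$, which vanishes by hypothesis; the coefficient of $v_{rr}$ is $DH_+$; and the coefficient of $v_r$ is $D\bigl(2H_+' + [(d-1)/r + \sigma/r^{d-1}]H_+\bigr)$.  Dividing through by $H_+$ yields the transformed operator
\[
D\left[\partial_{rr} + \left(\frac{2H_+'}{H_+} + \frac{d-1}{r} + \frac{\sigma}{r^{d-1}}\right)\partial_r\right].
\]
Matching this first-order coefficient to that of $\mathcal{L}^-_r$ reduces the entire theorem to the single identity
\[
\frac{H_+'(r)}{H_+(r)} = -\frac{\sigma}{r^{d-1}},
\]
which I would establish by solving the first-order ODE in $H_+'$ obtained from $\mathcal{L}^+_r H_+ = 0$, namely $H_+'(r) = Cr^{-(d-1)}\exp(\sigma/[(d-2)r^{d-2}])$, and then pinning down $H_+$ with the hitting-probability boundary conditions at $r = a$ and $r \to \infty$.

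The converse statement --- that $S_- H_+$ solves the positive-drift backward equation when $S_-$ solves the negative-drift one --- follows by exactly the same identity run in reverse, since $\partial_t(H_+ S_-) = H_+ \mathcal{L}^-_r S_- = \mathcal{L}^+_r(H_+ S_-)$.  The main obstacle is justifying the logarithmic-derivative identity $H_+'/H_+ = -\sigma/r^{d-1}$: the ODE $\mathcal{L}^+_r H_+ = 0$ is second-order and admits the constant function $1$ as one of its solutions in addition to the non-trivial harmonic $\exp(\sigma/[(d-2)r^{d-2}])$.  Since the non-trivial solution tends to $1$ rather than $0$ at infinity in $d>2$, the boundary condition at infinity forces $H_+$ into a subtracted combination of the two, and verifying that the resulting ratio $H_+'/H_+$ still takes the clean form demanded by the Doob transform is the crux of the calculation.
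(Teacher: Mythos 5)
Your route is the same as the paper's: expand $\mathcal{L}^+_r(H_+v)$ by the product rule, kill the zeroth-order term using $\mathcal{L}^+_r H_+=0$, and reduce everything to the single logarithmic-derivative identity $H_+'/H_+=-\sigma/r^{d-1}$, which is exactly the identity the paper records before its own computation. So far your bookkeeping of the coefficients of $v$, $v_r$, and $v_{rr}$ is correct.

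However, the difficulty you flag in your last sentences is not a deferred routine verification --- it is a step that fails, and it is the decisive one. The general solution of $\mathcal{L}^+_r H=0$ is $H=A+B\phi$ with $\phi(r)=\exp\bigl(\sigma/[(d-2)r^{d-2}]\bigr)$, and in $d>2$ one has $\phi(r)\to 1$ as $r\to\infty$. The solution satisfying both $H(a)=1$ and $H(r)\to 0$ is therefore the subtracted combination
\[
H_+(r)=\frac{\phi(r)-1}{\phi(a)-1},
\qquad\text{for which}\qquad
\frac{H_+'(r)}{H_+(r)}=-\frac{\sigma}{r^{d-1}}\cdot\frac{\phi(r)}{\phi(r)-1},
\]
and the extra factor $\phi/(\phi-1)$ is nowhere equal to $1$ (it grows like $(d-2)r^{d-2}/\sigma$ as $r\to\infty$). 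The clean identity your Doob transform requires holds only for the pure harmonic $\phi(r)/\phi(a)$ --- which is precisely the expression the paper displays for $H_+$, but that function tends to $e^{-\sigma/[(d-2)a^{d-2}]}>0$ at infinity and so is not the probability of hitting the sphere. In other words, your instinct that this is ``the crux'' is exactly right: with $H_+$ taken literally as the hitting probability, the transformed operator is not $\mathcal{L}^-_r$, and your argument (like the paper's) closes only if the hitting probability is replaced by the non-vanishing positive harmonic function $\phi(r)/\phi(a)$. This is also exactly where $d>2$ differs from $d=2$: there the non-constant harmonic function $(r/a)^{-\sigma/D}$ already vanishes at infinity, no constant needs to be subtracted, and the logarithmic derivative comes out clean. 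To complete a correct proof you must either resolve this discrepancy or restate the theorem in terms of the harmonic function that actually satisfies the needed identity.
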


In other words, there is a one-to-one correspondence between the survival probability in the appropriate cases of positive and negative drift (so long as the magnitude is fixed).  For notational simplicity, we prove the second statement, that $S_-H_+$ solves the positive drift backward equation, from which the main statement of the duality follows.  

\begin{proof}
As in the two dimensional case, the probability $H_+$ of hitting the $d$-sphere is found by solving the ordinary differential equation 
\[
0 = \mathcal{L}_{r}H_+ = D\left[H_{+,rr} + \left(\frac{d-1}{r} +\frac{\sigma}{r^{d-1}} \right)H_{+,r}\right]
\]
Solving this subject to the boundary conditions $H_+(a)=1$ and $\lim_{r\to\infty} H_+(r) = 0$ gives us
\begin{equation}
H_+(r) = \exp\left[ \frac{\sigma}{d-2} \left(\frac{1}{r^{d-2}} -\frac{1}{a^{d-2}}\right)\right]. \label{hplus}
\end{equation}
Before proceeding, note that
\begin{equation}
H_+' = -\frac{\sigma}{r^{d-1}}H_+.
\label{hplusd}
\end{equation}
Now let us suppose that $S_-$ solves the negative drift backward equation, with drift coefficient $-|\sigma|$.  Then, dividing through by $D$ and temporarily using primes on $S$ to denote differentiation with respect to $r$,  
\begin{align*}
\frac{1}{D}\mathcal{L}_r^+ (S_-H_+)
& = (S_-H_+)'' +  \left(\frac{d-1}{r} +\frac{|\sigma|}{r^{d-1}} \right) (S_-H_+)' \\
& = S_-'' H_+ + 2S_-'H_+' +  \left(\frac{d-1}{r} +\frac{|\sigma|}{r^{d-1}} \right) S_-'H_+ \\
& \quad + S_- \left[ H_+'' +  \left(\frac{d-1}{r} +\frac{|\sigma|}{r^{d-1}} \right) H_+' \right] \\
& = S_-''H_+ + 2S_-'H_+' + \left(\frac{d-1}{r} +\frac{|\sigma|}{r^{d-1}} \right) S_-'H_+ \\
& = \left[S_-'' + \left(\frac{d-1}{r} -\frac{|\sigma|}{r^{d-1}} \right) S_-'\right]H_+  \tag{by \refp{hplusd}} \\
& = \frac{1}{D}(\mathcal{L}^-_r S_-)H_+. \phantom{\int}
\end{align*}
Therefore, 
\[
\mathcal{L}_r^+ (S_-H_+) = (\mathcal{L}^-_r S_-)H_+ = (\partial_t S_-)H_+ = \partial_t (S_-H_+),
\]
and $S_-H_+$ solves the positive drift equation.  
\end{proof}

This demonstrates that we may convert between solutions of the positive and negative drift equations by way of the hitting probability $H_+$, and that in general, 
\[
\frac{S_+}{H_+} = S_-.
\]
The proof of Theorem 1 can be identically recast to work in the setting $d=2$, albeit for a different analytic form of $H_+$, as\refp{hplus} is not defined for $d=2$.  
\begin{theorem}
The first-passage distribution for a particle diffusing in $d$-dimensions to a hypersphere in the presence of a constant radial drift, conditioned on actually hitting the sphere, is independent of the sign of the drift.  
\end{theorem}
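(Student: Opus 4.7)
The plan is to deduce Theorem 2 from Theorem 1 in essentially a single step, since Theorem 1 has already done the heavy lifting by establishing the relation $S_+/H_+ = S_-$ between survival probabilities. The derivation for first-passage distributions is then obtained simply by differentiating in time.

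Concretely, the first step is to differentiate the identity $S_+ = H_+\,S_-$ with respect to $t$. Because $H_+(r)$ depends only on the spatial variable, this immediately yields $\partial_t S_+ = H_+\,\partial_t S_-$, and using the relation $f = \partial_t S$ (as noted preceding Theorem 1) gives
\[
\frac{f_+(r,t)}{H_+(r)} \;=\; f_-(r,t).
\]
This is the core functional identity; interpreting both sides completes the proof.

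The second step is to identify the sides as conditional first-passage distributions. On the left, since $H_+(r) = \int_0^\infty f_+(r,t)\,dt$ is the total probability of hitting the sphere under positive drift, $f_+/H_+$ is a proper probability density in $t$ that is by definition the first-passage distribution conditioned on $\{T<\infty\}$, i.e.\ $\tilde f_+$. On the right, in the regime where the negative-drift process hits the sphere with probability one, $H_-(r) = 1$, so $f_-$ is itself a proper first-passage density and coincides with its conditional version $\tilde f_-$. Combining, $\tilde f_+(r,t) = \tilde f_-(r,t)$, which is the statement of Theorem 2.

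The main subtlety is not in this deduction but in Theorem 1's identification $S_+ = H_+\,S_-$: beyond verifying that both sides satisfy the same backward equation (which is the operator calculation already carried out), one needs to match boundary and initial data and invoke uniqueness for the backward Dirichlet problem on the unbounded domain $(a,\infty)$. With that identification taken as given, Theorem 2 is a one-line application of $\partial_t$ together with the elementary observation that $H_+$ carries no time dependence.
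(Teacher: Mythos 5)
Your proposal is correct and follows essentially the same route as the paper: Theorem 2 is obtained by applying $\partial_t$ to the relation $S_+ = H_+ S_-$ from Theorem 1 and using $f_\pm = \partial_t S_\pm$ to get $f_+/H_+ = f_-$. Your additional remarks on identifying both sides as conditional densities and on the uniqueness needed to pin down $S_+ = H_+ S_-$ are sensible elaborations of what the paper leaves implicit, but they do not change the argument.
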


\begin{proof} 
This follows immediately from Theorem 1 and the fact that $f_\pm = \partial_t S_\pm$.  In other words, 
\begin{equation}
\frac{f_+(r,t)}{H_+(r)} = f_-(r,t).\label{genduality}
\end{equation}
\end{proof}

The existence of the duality in any positive integer dimension begs the question as to the mathematical source of the symmetry.  The key observation to make is that in going from\refp{eqn:genforward} to\refp{eqn:genbackward}, while we formally compute the adjoint in Euclidean coordinates before converting back to hyperspherical coordinates, the \textit{effect} of computing the adjoint was only to change the sign of the drift magnitude $\sigma$.  We now illustrate how this allows for the direct relation\refp{genduality} between the forward and backward hitting distributions.

%%%%%%%%%%%%%%%%

\section{The adjoint operator and the underlying symmetry}

Consider the forward and backward equations governing the transition density $u(x,t|y,s)$ of a one-dimensional time-\\ homogeneous process (where the drift and diffusion coefficients may now have spatial dependence):
\begin{align*}
u_s & = -\mathcal{L}_y u := -Du_{yy} - \sigma u_y \tag{B} \\
u_t & = \phantom{-} \mathcal{L}_x^* u \phantom{:}= (Du)_{xx} - (\sigma u)_x \tag{F}
\end{align*}
As was stated for the survival probability, by the time-homogeneity of the process the backward equation may be solved `forward' in time.  If we define $v(\,\cdot \, , t) = u(\,\cdot \,, t-s)$ then (B) becomes
\begin{equation}
v_t = \mathcal{L}_y v = Dv_{yy} + \sigma v_y \tag{B'}
\end{equation}

A natural question to ask is how solutions to (F) and solutions to (B') are related. \\

\noindent \textbf{Proposition.}
\textit{If $v$ solves} (B') \textit{and $u_0$ solves} $\mathcal{L}^*_x u = 0$ \textit{then} $u(x,t)=v(x,t) u_0(x)$ \textit{solves} (F).  

\begin{proof}
We first find $u_0$ directly.  Setting $\mathcal L^*_xu=0$, integrating once, and rearranging yields
\begin{equation}
u_0(x) = \exp\left[ \int^x \frac{\sigma - D'}{D}\right].
\end{equation}
It follows that 
\begin{align*}
\mathcal{L}^*_x u & = (Dvu_0)_{xx} - (\sigma vu_0)_{x}\\
& = Dv_{xx}u_0 + 2 (Du_0)_x v_x + (Du_0)_{xx}v - (\sigma u_0)_xv - \sigma v_x u_0.
\end{align*}
The third and fourth terms cancel by assumption:  $u_0$ solves $\mathcal L^*_x u_0=0$.  Since $D(u_0)_x = (\sigma - D_x)u_0$, it follows that $(Du_0)_x = \sigma u_0$ and thus
\begin{equation}
\mathcal{L}^*_x u  = (Dv_{xx} - \sigma v_x)u_0  = v_t u_0 = (vu_0)_t = \partial_t u.
\end{equation}
\end{proof}

The source of the symmetry in the first-passage duality\refp{1Dduality} and\refp{genduality} is the fact that for constant diffusion and constant drift, the spatial operator found by reversing the sign of the drift is the same as that found by taking the adjoint.  By way of Prop.\,1, the solutions to the positive and negative drift equations therefore differ only by a factor of $u_0$.  Note that the proposition holds irrespective of boundary conditions, however if we impose $u_0(a)=1$ and $\lim_{x\to\infty}u_0(x)=0$ then $u_0=H$ and is the exact factor needed for conditioning the first-passage distributions.  This also makes it clear that we could have proved Theorems 1 and 2 from the transition density and a general form of $u_0$, but we opted to use the conditioned survival probability as it is somewhat easier to intuit than the object $u/u_0$.

%%%%%%%%%%%%%%%%

\section{Concluding remarks}

We have extended Krapivsky and Redner's simple duality for first-passage time distributions to dimensions greater than 2.  In order to do so, we recognized that the mathematical source of the symmetry underpinning the duality is the fact that computing the adjoint of the spatial operator is equivalent to reversing the sign of the drift.  Not only does this shed light on the duality, it accommodates proving it without solving the governing dynamical equations directly, a task that becomes formidable in higher dimensions.  

There are two obvious directions to explore for future work.  The first is to consider non-spherical target sets in dimensions greater than one.  Given that dividing by $u_0$ always allows us to convert forward equation solutions to backward ones, irrespective of boundary conditions, it may be possible that the duality, or some approximation of it, may extend to a larger class of target sets.  A reasonable place to start would be convex target sets whose boundaries are smooth and exhibit some radial symmetry, or perhaps star-shaped neighborhoods of 0 (convex with respect to the origin).  Preliminary numerical experiments suggest that while the duality should not hold true, it may be possible to bound the extent to which the inward and conditioned outward hitting times disagree based on the geometry of the target set.   

The other and possibly more interesting direction to consider would be the extent to which the duality holds for other time-homogeneous processes.  In one dimension this is quickly understood; the duality does not extend to anisotropic diffusion or non-constant drift.  For $\mathcal L_+ f:= (Df)'' + (\sigma f)'$, then 
\begin{align*}
0 & = \mathcal L_- f- \mathcal L_+^*f \\
& =  [ (Df)'' - (\sigma f)' ]-D f'' - \sigma f' \\
& = 2D'f' + D''f - \sigma' f. 
\end{align*} 
For an arbitrary $f$, $2D'f'=0$ implies $D$ be constant and in turn $D'' f - \sigma' f=0$ implies that $\sigma$ be constant. 

In higher dimensions, the picture is somewhat less clear.  In the radial potential drift scenario, the symmetry of the problem allowed us to reduce it to one spatial variable and it was in this characterization that we explored the duality.  If, however, we start from Euclidean coordinates and again set $(\mathcal L_- -\mathcal L_+^*) f =0$, for $f=f(\vec x)$ then 
\begin{align*}
0 & =  \mathcal L_- f - \mathcal L_+^*f \\
& = \Delta (Df) - \nabla \cdot (\vec\sigma f) - (D\Delta f - \vec\sigma \cdot (\nabla f)) \\
& = 2\nabla D \cdot \nabla f + (\Delta D - \nabla \cdot \vec\sigma) f
\end{align*}
As before, given no restrictions on $f$, the first term implies $D$ be constant, but now the second term gives a generalized condition on the drift:  $\vec\sigma$ must be a divergence-free velocity field.  

Extending the duality to some subset of divergence-free drifts is clearing an enticing goal and could be a substantial generalization.  It is immediately clear that the inclusion of a purely tangential component to the drift, in the scenario studied in this paper, should not break the duality.  In two dimensions, for example, if we add some spiraling tendency such as $\langle -y, x\rangle$ to the drift we have studied thus far, the problem still projects cleanly back to the radial component.  The more intriguing processes to then consider are those that are purely radial but non-constant with respect to $\theta$.  These may be written generally as 
\[
\sigma_f(r,\theta) = \frac{f(\theta)}{r} \hat r
\] 
where $f$ is a smooth, periodic, strictly positive function with $f(0) = f(2\pi)$.  Remaining strictly positive guarantees that reversing the sign of the drift sufficiently alters the dynamics, between recurrence and transience, a key component in the set up of the duality.  

Preliminary numerical results suggest that non-constant functions $f$ break the duality, but it may be possible to reasonably estimate the inward or conditioned outward expected hitting time, given an initial position $\vec r$ based on the local average of $f$.  Speculation as to what happens when allowing for tangential drift in addition to a non-constant radial component is beyond our current level of intuition of the problem.  

A deeper exploration of these possible generalizations presents an exciting project and will be the possible focus of future work.

%%%%%%%%%%%%%%%%

\bibliographystyle{plain}
\bibliography{FPdualityBib}

\end{document}